\newtheorem{theorem}{Theorem}
\newtheorem{lemma}{Lemma}
\newcommand{\conv}{\mathrm{conv}}
\renewcommand{\phi}{\varphi}
\newcommand{\R}{\mathbb{R}}
\newcommand{\cP}{\mathcal{P}}
\newcommand{\cT}{\mathcal{T}}
\def\ds1{\mathds{1}}
\renewcommand{\epsilon}{\varepsilon}
\newlength{\minipagewidth}
\newcommand{\beq}{\begin{equation}}
\newcommand{\eeq}{\end{equation}}
\newcommand{\beqa}{\begin{eqnarray}}
\newcommand{\eeqa}{\end{eqnarray}}
\newcommand{\beqan}{\begin{eqnarray*}}
\newcommand{\eeqan}{\end{eqnarray*}}
\def\ba#1\ea{\begin{align*}#1\end{align*}} 
\def\banum#1\eanum{\begin{align}#1\end{align}} 
\newcommand{\BlackBox}{\rule{1.5ex}{1.5ex}}  
\newenvironment{proof}{\par\noindent{\bf Proof\ }}{\hfill\BlackBox\\[2mm]}
\begin{document}
\title{On the local profiles of trees}
\author{S{\'e}bastien Bubeck\footnote{Department of Operations Research and Financial Engineering, Princeton University, Princeton 08540, USA. Email: \texttt{sbubeck@princeton.edu}.} \and Nati Linial\footnote{School of Computer Science and Engineering, The Hebrew University of Jerusalem, Jerusalem 91904, Israel. Email: \texttt{nati@cs.huji.ac.il}.}\thanks{Research supported in part by grants from the ISF and I-Core.}}
\date{}

\maketitle

\abstract{We study the local profiles of trees. We show that, in contrast with the situation for general graphs, the limit set of $k$-profiles of trees is convex. We initiate a study of the defining inequalities of this convex set. Many challenging problems remain open.}

\section{Introduction}
For (unlabelled) trees $T$, $S$, we denote by $c(S, T)$ the number of copies of $S$ in $T$, or in other words the number of injective homomorphism from $S$ to $T$. Let $T_1^k, \hdots, T_{N_k}^k$ be a list of all (isomorphism types of) $k$-vertex trees\footnote{Recall that the sequence $(N_k)_{k \geq 1}$ starts with $1,1,1,2,3,6\ldots$.}, where $T_1^k, T_2^k$ are the $k$-vertex path and the $k$-vertex star, respectively. {\em The $k$-profile of a tree $T$} is the vector $p^{(k)}(T) \in \R^{N_k}$ whose $i$-th coordinate is
$$(p^{(k)}(T))_i = \frac{c(T^k_i, T)}{Z_k(T)}, \ \text{where} \ Z_k(T) = \sum_{j=1}^{N_k} c(T^k_j, T).$$
In other words the $k$-profile is the induced density vector of $k$-vertex trees.
We are interested in understanding the limit set of $k$-profiles:
$$\Delta_{\cT}(k) = \left\{p \in \R^{N_k} : \exists (T_n), |T_n| \xrightarrow[n \rightarrow \infty]{} \infty, \, \text{and} \, p^{(k)}(T_n) \xrightarrow[n \rightarrow \infty]{} p \right\} ,$$
where $|T|$ denotes the number of vertices in $T$. 
Our main result, proved in Section \ref{sec:conv}, is:
\begin{theorem} \label{th:conv}
The set $\Delta_{\cT}(k)$ is convex.
\end{theorem}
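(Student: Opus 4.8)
The plan is to prove convexity directly from the definition of $\Delta_{\cT}(k)$. Fix $p,q\in\Delta_{\cT}(k)$, witnessed by sequences $(T_n)$, $(S_n)$ with $|T_n|,|S_n|\to\infty$, $p^{(k)}(T_n)\to p$, $p^{(k)}(S_n)\to q$, and fix $\lambda\in(0,1)$ (the cases $\lambda\in\{0,1\}$ being trivial). I will construct trees $R_n$ with $|R_n|\to\infty$ and $p^{(k)}(R_n)\to\lambda p+(1-\lambda)q$; this shows $\lambda p+(1-\lambda)q\in\Delta_{\cT}(k)$ and hence, $\lambda$ being arbitrary, that $\Delta_{\cT}(k)$ is convex. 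The tree $R_n$ consists of $m_T=m_T(n)$ disjoint copies of $T_n$ and $m_S=m_S(n)$ disjoint copies of $S_n$, assembled into a single tree by lining the copies up and joining each consecutive pair by a \emph{connector path of length exactly $k$} (that is, $k-1$ fresh internal vertices), every connector attached inside a copy $C$ at one prescribed vertex $u_C$. The multiplicities are $m_T=\lceil\lambda M/Z_k(T_n)\rceil$ and $m_S=\lceil(1-\lambda)M/Z_k(S_n)\rceil$, where $M=M(n)$ is any sequence with $M/Z_k(T_n)\to\infty$ and $M/Z_k(S_n)\to\infty$; then $m_T,m_S\to\infty$ while $m_TZ_k(T_n)/M\to\lambda$ and $m_SZ_k(S_n)/M\to1-\lambda$.

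The purpose of the long connectors is that no $k$-vertex subtree of $R_n$ can meet two distinct copies: connecting two copies forces the inclusion of an entire connector together with both its endpoints, i.e.\ $k+1$ vertices. Hence each $k$-subtree of $R_n$ either lies inside a single copy, and these contribute exactly $m_T\,c(T_i^k,T_n)+m_S\,c(T_i^k,S_n)$ to the $i$-th coordinate of $c(\,\cdot\,,R_n)$, or is an \emph{interface} subtree contained in one copy $C$ together with a portion of the one or two connectors attached to $C$ at $u_C$. An interface subtree at $C$ is determined by its trace on $C$ (a subtree of $C$ of size $a\in\{1,\dots,k-1\}$ containing $u_C$) and by how far it runs along each connector, so their number is at most $k\cdot g(C,u_C)$, where $g(C,u):=\sum_{a=1}^{k-1}s_a(C,u)$ and $s_a(C,u)$ is the number of $a$-vertex subtrees of $C$ through $u$. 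Summing over copies, the $i$-th coordinate of $c(\,\cdot\,,R_n)$ equals $m_T\,c(T_i^k,T_n)+m_S\,c(T_i^k,S_n)+e_i$ with $e_i\ge0$ and $\sum_i e_i=O_k\bigl(m_T\,g(T_n,u_{T_n})+m_S\,g(S_n,u_{S_n})\bigr)$, the $O_k$ absorbing a bounded automorphism factor; likewise $Z_k(R_n)=m_TZ_k(T_n)+m_SZ_k(S_n)+\sum_i e_i$.

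It remains to pick the attachment vertices so that $\sum_i e_i=o(M)$. Writing $c_a(T)$ for the number of $a$-vertex subtrees of $T$, one has $\sum_{u\in V(T)}g(T,u)=\sum_{a=1}^{k-1}a\,c_a(T)$. Since every $a$-vertex subtree lies in at least one $(a+1)$-vertex subtree while every $(a+1)$-vertex subtree contains at most $k-1$ subtrees on $a$ vertices (one per leaf, and it has at most $a$ leaves), we get $c_a(T)\le(k-1)c_{a+1}(T)$, hence $\sum_{a=1}^{k-1}a\,c_a(T)\le C_k\,c_k(T)\le C_k\,Z_k(T)$ for a constant $C_k$ depending only on $k$. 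Averaging over the $|T|$ vertices produces a vertex $u$ with $g(T,u)\le C_k\,Z_k(T)/|T|$; choose $u_{T_n}$, $u_{S_n}$ this way. Then $\sum_i e_i=O_k\bigl(m_TZ_k(T_n)/|T_n|+m_SZ_k(S_n)/|S_n|\bigr)=O_k\bigl(M/|T_n|+M/|S_n|\bigr)=o(M)$, since $|T_n|,|S_n|\to\infty$ while $m_TZ_k(T_n)$ and $m_SZ_k(S_n)$ are $O(M)$. Dividing numerator and denominator of $p^{(k)}(R_n)_i$ by $M$ and using $Z_k(R_n)\ge m_TZ_k(T_n)\ge\lambda M$, the error contributions vanish in the limit and $p^{(k)}(R_n)_i\to\lambda p_i+(1-\lambda)q_i$; moreover $|R_n|\ge m_T|T_n|\to\infty$. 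This completes the argument.

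The step I expect to be the crux is the treatment of the interfaces between glued copies. If the copies were glued directly, or hung off consecutive vertices of a single host path, a single interface could carry $\Theta\bigl(g(T_n,u)^2\bigr)$ $k$-subtrees — a \emph{product} of two independent traces — and for trees of unbounded maximum degree (already stars, as soon as $k\ge4$) this term dominates the $\Theta\bigl(Z_k(T_n)\bigr)$ subtrees inside a copy, so the limit would not be the intended convex combination. Two devices defeat this: connectors of length $\ge k$, which kill all cross-copy subtrees and leave only the \emph{additive} $O\bigl(g(T_n,u)\bigr)$ contribution per copy, and the averaging step, which lets each copy be attached at a vertex whose $g$-value is a $o(1)$ fraction of $Z_k$. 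The remaining ingredients — the elementary bound $c_a(T)\le(k-1)c_{a+1}(T)$ and the choice of $M(n)$ outgrowing both $Z_k(T_n)$ and $Z_k(S_n)$ — are routine.
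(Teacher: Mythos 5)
Your construction is correct, and its skeleton is the same as the paper's: string together many copies of $T_n$ and $S_n$, joined by connector paths with $k-1$ new internal vertices so that no $k$-vertex subtree can straddle two copies, with multiplicities chosen so that the two families contribute to $Z_k$ in ratio $\lambda:(1-\lambda)$ (the paper's iterated gluing $T_n^{\boxtimes_k [\alpha_n Z_k(S_n)]} \boxtimes_k S_n^{\boxtimes_k [(\beta_n-\alpha_n) Z_k(T_n)]}$ plays exactly the role of your $m_T,m_S$). Where you genuinely diverge is in the step you correctly identify as the crux, namely controlling the junction error. The paper attaches the connectors at arbitrary leaves, bounds the spurious copies created at each junction by $k D^{k-2}$ per $k$-tree type, where $D$ is the maximum degree, and then kills this term with the observation that $Z_k(T_n)\ge\binom{D(T_n)}{k-1}$ (count $k$-stars at a maximum-degree vertex), hence $D(T_n)^{k-2}=o(Z_k(T_n))$. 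You instead bound the interface contribution of each copy $C$ by $O_k(g(C,u_C))$, prove $\sum_{u}g(T,u)\le C_k Z_k(T)$ via $c_a\le(k-1)c_{a+1}$, and choose the attachment vertex by averaging so that $g(T,u)\le C_k Z_k(T)/|T|$; the per-copy error is then beaten by the factor $1/|T_n|$ rather than by a degree-versus-star comparison. Both routes are sound: the paper's is shorter and needs no care in choosing where to attach (any leaf works), while yours is somewhat more self-contained and never mentions the maximum degree. One small slip that does not affect anything: a tree on $2$ vertices has two leaves, so for $a=1$ the correct statement is $c_1\le 2c_2$ rather than ``one $a$-subtree per leaf, at most $a$ leaves''; since $2\le k-1$ for $k\ge 3$ (and smaller $k$ are trivial), your inequality $c_a\le(k-1)c_{a+1}$, and with it the bound $\sum_{a=1}^{k-1}a\,c_a(T)\le C_k Z_k(T)$ (valid once $|T|\ge k$, which holds for large $n$), still stands as stated.
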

This property of profiles of trees is in sharp contrast with what happens for general graphs. Let $\Delta(k)$ be the $k$-profiles limit set of general graphs (which is defined like $\Delta_{\cT}(k)$ with a list of all $k$-vertex graphs rather than $k$-vertex trees). The first and second coordinates in $p \in \Delta(k)$ correspond to $k$-anticliques and $k$-cliques respectively. Clearly $e_1 = (1,0,\hdots,0), e_2=(0,1,0,\hdots,0) \in \Delta(k)$ but $\frac12 e_1 + \frac12 e_2 \not\in \Delta(k)$. Not only is $\Delta(k)$ nonconvex, it is even computationally infeasible to derive a description of its convex hull, see \cite{HN11}. Our understanding of the sets $\Delta(k)$ is rather fragmentary (e.g. \cite{HLNPS12}). Flag algebras \cite{Raz07} are a major tool in such investigations. The convexity of $\Delta_{\cT}(k)$ suggests that we may have a better chance understanding profiles of trees, by deriving the linear inequalities that define these sets. We take some steps in this direction. Concretely we prove the following result in Section \ref{sec:sp}.
\begin{theorem} \label{th:12k}
Let $p \in \Delta_{\cT}(k)$, then
$$p_1 + p_2 \geq \frac{1}{2 N_k k^{2k}} .$$
\end{theorem}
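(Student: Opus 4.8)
Since $Z_k(T)=\sum_{j=1}^{N_k}c(T_j^k,T)\le N_k\max_j c(T_j^k,T)$, it suffices to show that for every $k$-vertex tree $R$ and every tree $T$ that is large enough in terms of $k$ alone,
\[
c(R,T)\ \le\ k^{2k}\bigl(c(P_k,T)+c(S_k,T)\bigr);
\]
for then $p_1+p_2=\frac{c(P_k,T)+c(S_k,T)}{Z_k(T)}\ge\frac{1}{N_k k^{2k}}\ge\frac{1}{2N_k k^{2k}}$ for all large $T$, hence for every $p\in\Delta_{\cT}(k)$, since by definition $\Delta_{\cT}(k)$ consists of limits of $p^{(k)}(T_n)$ with $|T_n|\to\infty$. (No such per-type bound holds for all $T$ — take $T\cong R$ a $k$-vertex tree that is neither $P_k$ nor $S_k$, so the right side vanishes — so the hypothesis that $T$ is large must be used; the limit-set formulation is exactly what makes this legitimate.) The inequality is trivial for $R\in\{P_k,S_k\}$ and for $k\le 4$ every $k$-vertex tree is a path or a star, so assume $k\ge 5$ and $R\notin\{P_k,S_k\}$. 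Call a copy $\phi\colon R\hookrightarrow T$ (an injective homomorphism) \emph{heavy} if $\phi(R)$ contains a vertex of $T$-degree $\ge k-1$, and \emph{light} otherwise; I would bound the numbers of heavy and of light copies separately. Recall $c(S_k,T)=\sum_v \deg_T(v)^{\underline{k-1}}$, where $d^{\underline{m}}:=d(d-1)\cdots(d-m+1)$; only $v$ with $\deg_T(v)\ge k-1$ contribute.

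For the heavy copies: given heavy $\phi$, let $v$ be the vertex of $\phi(R)$ of maximum $T$-degree (ties broken by a fixed vertex order) and root $R$ at $\phi^{-1}(v)$. Every vertex of $\phi(R)$ then has $T$-degree $\le\deg_T(v)$, so reading off $\phi$ down the rooted tree there are at most $\deg_T(v)$ choices at each of the $k-1$ non-root vertices; hence the number of heavy copies realizing a given $v$ is at most $k\,\deg_T(v)^{k-1}$. Summing over $v$ with $\deg_T(v)\ge k-1$ and using $\deg_T(v)^{k-1}\le e^{k-1}\deg_T(v)^{\underline{k-1}}$ (valid since $\deg_T(v)\ge k-1$), the total is at most $k\,e^{k-1}\,c(S_k,T)$. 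For the light copies, note $\phi(R)$ lies in a single connected component $\widetilde S$ of the subgraph of $T$ induced on vertices of $T$-degree $\le k-2$; thus $\widetilde S$ is a subtree with $\Delta(\widetilde S)\le k-2$, and rooting $R$ anywhere shows the number of copies of $R$ inside $\widetilde S$ is at most $|\widetilde S|(k-2)^{k-1}$. If $|\widetilde S|<2k^{k-1}$, then — using $|T|>|\widetilde S|$, i.e.\ that $T$ is large — the component $\widetilde S$ has a neighbour $c$ with $\deg_T(c)\ge k-1$; each such $c$ has at most $\deg_T(c)$ low-degree components adjacent to it, so the number of light copies with $|\widetilde S|<2k^{k-1}$ is at most $\sum_{c:\deg_T(c)\ge k-1}\deg_T(c)\cdot 2k^{2k-2}\le 2k^{2k-2}c(S_k,T)$, again using $\deg_T(c)\le\deg_T(c)^{\underline{k-1}}$ for $\deg_T(c)\ge k-1$.

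There remain the light copies with $|\widetilde S|\ge 2k^{k-1}$. Root such a $\widetilde S$ at a leaf: every vertex at depth $\ge k-1$ is the deepest vertex of a unique $k$-vertex sub-path of $\widetilde S$, and there are at most $k^{k-1}$ vertices of depth $\le k-2$ (a ball of radius $k-2$ in a tree of maximum degree $\le k-2$), so $\widetilde S$ contains at least $|\widetilde S|-k^{k-1}\ge|\widetilde S|/2$ path-subtrees on $k$ vertices. Comparing with the bound $|\widetilde S|(k-2)^{k-1}$ on the copies of $R$ inside $\widetilde S$, each such component contributes at most $2k^{k-1}$ copies of $R$ per $k$-vertex path-subtree it contains, so over all such components the number of light copies is at most $k^{k-1}c(P_k,T)$. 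Adding the three estimates gives $c(R,T)\le\bigl(ke^{k-1}+2k^{2k-2}\bigr)c(S_k,T)+k^{k-1}c(P_k,T)\le k^{2k}\bigl(c(P_k,T)+c(S_k,T)\bigr)$, as required. Two points deserve care. First, heavy copies must be counted directly as above; the tempting map ``copy $\mapsto$ a nearby $k$-vertex star'' fails because a $k$-star does not remember the subtree it came from and so has uncontrollably large fibres. Second, the one genuinely structural input is that a large low-degree component contains \emph{many} $k$-paths — linearly many in its size, up to an additive error depending only on $k$ — and the constants there are exactly what make the final estimate close up at $k^{2k}$, matching (with a little to spare) the $\tfrac{1}{2N_k k^{2k}}$ of the statement; the rest is bookkeeping, with largeness of $T$ entering only through ``a bounded low-degree component abuts a vertex of degree $\ge k-1$.''
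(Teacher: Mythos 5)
Your proof is correct, and it takes a genuinely different route from the paper. The paper argues by induction on the number of vertices: Lemma \ref{lem:12k} treats trees of maximum degree at most $k-2$ by deleting a leaf, and Lemma \ref{lem:12k2} establishes the enumerative inequality $Z_k \leq N_k k^{2k}(P_k+2S_k+1)$ for \emph{every} tree by deleting a maximum-degree vertex and tracking the potential $(P_k+1)\ds1\{P_k \geq 1\}+2S_k$ over the resulting forest; the theorem then follows in the limit. You instead give a direct, non-inductive charging argument for each fixed $k$-vertex tree $R$: heavy copies (meeting a vertex of degree $\geq k-1$) are charged to stars at that vertex; light copies sit inside one component of the low-degree induced forest, and such a component is charged either to its linearly many ``vertical'' $k$-paths when it has at least $2k^{k-1}$ vertices, or to an adjacent high-degree vertex when it is smaller --- the latter being the only place where largeness of $T$ enters, which the limit-set formulation indeed licenses. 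Your constants close ($k e^{k-1}+2k^{2k-2} \leq k^{2k}$ for $k \geq 5$, and $k \leq 4$ is trivial), so the argument is sound. What the paper's induction buys is an inequality valid for every finite tree with no size hypothesis, and a template that is reused in sharper form for the $k=5$ analysis; what your argument buys is a more transparent structural explanation of why only paths and stars can carry the bulk of the profile, with the two mechanisms (high-degree vertices versus long low-degree stretches) cleanly separated. One minor bookkeeping caveat: you take $c(S_k,T)=\sum_v d(v)(d(v)-1)\cdots(d(v)-k+2)$, i.e.\ injective homomorphisms, whereas the paper's Lemma \ref{lem:12k2} counts unlabelled copies via $\binom{d(v)}{k-1}$; the paper's own definition identifies the two notions, and your estimates still close at $k^{2k}$ under the copies convention (replace $d \leq d(d-1)\cdots(d-k+2)$ by $d \leq (k-1)\binom{d}{k-1}$ and absorb the extra $(k-1)!$ in the heavy count via $e^{k-1}(k-1)! \leq k^{2k-2}$), so this is a convention issue, not a gap.
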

We suspect that a stronger lower bound holds here. In Section \ref{sec:sp} we give examples which show that $p_1 +  p_2$ can be exponentially small in $k$. 

For $5$-profiles we get a better inequality. In Section \ref{sec:5profile} we prove
\begin{theorem} \label{th:125}
Let $p \in \Delta_{\cT}(5)$, then
$$p_2 \geq \frac{1-2p_1}{37}.$$
\end{theorem}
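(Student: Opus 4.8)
The plan is to use that there are only three trees on five vertices, reduce the statement to a single inequality among subtree counts, and prove that inequality by a vertex-local analysis of the degree sequence.

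Write $Y$ for the third $5$-vertex tree, a vertex of degree $3$ with pendant branches of lengths $2,1,1$, so that $p_1+p_2+p_3=1$. Let $s_i=s_i(T)$ count the subtrees of $T$ isomorphic to $T^5_i$; since $c(T^5_i,T)=|\mathrm{Aut}(T^5_i)|\,s_i$ with $|\mathrm{Aut}(P_5)|=|\mathrm{Aut}(Y)|=2$ and $|\mathrm{Aut}(K_{1,4})|=4!$, the inequality $p_2\ge\frac{1-2p_1}{37}$ is, after clearing denominators and using $p_1+p_2+p_3=1$, equivalent to $p_1+36p_2\ge p_3$, that is, to
\[
 s_1(T)+432\,s_2(T)\ \ge\ s_3(T).
\]
Since $\Delta_{\cT}(5)$ consists of limit points and $Z_5(T)\to\infty$ as $|T|\to\infty$, it suffices to prove this up to an additive $o(Z_5(T))$; in fact I expect an absolute constant error, coming only from a bounded list of small subconfigurations (such as $Y$ itself and the ``double chair'' on six vertices), for which the inequality fails by $O(1)$ but which impose no deficit once $|T|$ is large.

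Next I would pass to the degree sequence. Writing $a_v:=\sum_{u\sim v}(d_u-1)$ (the number of vertices at distance $2$ from $v$) and $b_v:=\sum_{u\sim v}(d_u-1)^2$, and classifying subtrees by their centre, one gets $s_2=\sum_v\binom{d_v}{4}$, $s_3=\sum_v\binom{d_v-1}{2}a_v$ (a chair is a claw with one leg extended by an edge), and $s_1=\sum_v\sum_{\{u,w\}\subseteq N(v)}(d_u-1)(d_w-1)=\tfrac12\sum_v(a_v^2-b_v)$. Hence
\[
 2\bigl(s_1+432\,s_2-s_3\bigr)=\sum_v G(v),\qquad G(v):=(a_v^2-b_v)+36\,d_v(d_v-1)(d_v-2)(d_v-3)-(d_v-1)(d_v-2)\,a_v ,
\]
and it remains to show $\sum_v G(v)\ge -O(1)$. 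Now $a_v^2-b_v=2\sum_{\{u,w\}}(d_u-1)(d_w-1)\ge0$, so $G(v)\ge0$ automatically when $d_v\le2$; a direct check shows $G(v)\ge0$ also when $d_v=3$ and $v$ has at least two non-leaf neighbours (with at most one of degree $2$), and when $d_v\ge4$ and $a_v\le 36\,d_v(d_v-3)$. The ``bad'' vertices are therefore, essentially, degree-$3$ vertices with only one non-leaf neighbour, together with high-degree vertices having an enormous $2$-sphere; in every case there is a distinguished non-leaf neighbour $u$ ``responsible'' for the deficit, and the deficit at $v$ is $O(d_u)$.

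The last step is a charging scheme for the bad vertices. If the responsible neighbour $u$ has bounded degree, the deficit at $v$ is $O(1)$, and one charges it to a nearby copy of $P_5$ or to a nearby degree-$3$ vertex with three non-leaf neighbours (which carries a strictly positive surplus in $G$), checking that each such target is charged only boundedly often. If $d_u$ is large, then $u$ itself satisfies $G(u)\gtrsim 36\,d_u^{\,4}$, which dominates the combined deficit of all bad vertices for which $u$ is responsible; this is exactly where the coefficient $432$ — equivalently the constant $37$ — is spent, and it is far from optimal (the near-extremal examples, e.g.\ caterpillars, already have $p_2=0$), so only a crude bound is needed. Summing and absorbing the finitely many uncompensated small configurations into an $O(1)$ term gives $\sum_v G(v)\ge-O(1)$, proving the theorem. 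I expect the real difficulty to be not any single estimate but organizing the charging globally so that no target is overcharged while tracking some explicit constant; a more mechanical alternative would be to certify positivity of $\sum_v G(v)$ directly by a small semidefinite / flag-algebra computation, trading the casework for opacity.
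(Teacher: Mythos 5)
Your reduction and your identity are fine: the copy-count formulas $s_2=\sum_v\binom{d_v}{4}$, $s_3=\sum_v\binom{d_v-1}{2}a_v$, $s_1=\tfrac12\sum_v(a_v^2-b_v)$ are correct in a tree, and so is $2(s_1+432s_2-s_3)=\sum_vG(v)$. (One caution on the reduction itself: the factor $432$ comes from reading $c(\cdot,\cdot)$ as injective homomorphisms; the paper's actual computations use subtree copies, under which the constant $37$ forces the stronger target $s_3\le s_1+36s_2+o(Z_5)$ — i.e.\ exactly the lemma the paper proves, $Y\le 36S+P+4$.) The genuine gap is that everything after the identity is a plan, not a proof, and two of the stated local facts are wrong. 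For $d_v=3$ with neighbour degrees $x_i+1$ one has $G(v)=2(x_1x_2+x_1x_3+x_2x_3)-2(x_1+x_2+x_3)$, so a degree-$3$ vertex with a leaf neighbour, a degree-$2$ neighbour and any third neighbour has $G(v)=-2$: your claim that two non-leaf neighbours with at most one of degree $2$ already give $G\ge0$ is false, and there can be $\Theta(n)$ such deficit vertices, so they cannot be swept into the $O(1)$ error — they must be charged, and ``charge to a nearby copy of $P_5$'' is not a defined operation inside the $\sum_vG(v)$ accounting (the $P_5$'s are already spent in the $a_v^2-b_v$ terms; the surplus has to be located at specific other vertices, e.g.\ degree-$2$ vertices with both neighbours of degree $\ge3$, and one must verify bounded overcharging). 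Likewise the anchor estimate ``$d_u$ large $\Rightarrow G(u)\gtrsim 36d_u^4$'' is unproven and false in general: if $u$ has $d_u-1$ leaf neighbours and one neighbour of degree about $36d_u(d_u-3)$, then $a_u^2-b_u=0$ and $G(u)\approx0$ (and slightly larger neighbour degree makes $G(u)<0$), so a high-degree vertex can itself be a deficit vertex rather than an absorber; also the deficit at a bad $v$ is of order $(d_v-1)(d_v-2)a_v$, not $O(d_u)$, since $d_v$ may grow too. Since you explicitly defer ``organizing the charging globally so that no target is overcharged'', the heart of the argument is missing.

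For comparison, the paper avoids a global charging scheme altogether: it splits $Y=Y_s+Y_\ell$ according to whether an edge has an endpoint of degree $\ge4$, proves $Y_\ell\le36S$ by a term-by-term comparison of the edge expansions of $Y$ and $S$, and proves $Y_s\le P+4$ by induction on the tree, whose base case (max degree $\le3$) is an exact identity obtained by classifying degree-$2$ and degree-$3$ vertices by the degrees of their neighbours and double-counting edges. Your vertex-local functional $\sum_vG(v)$ is a genuinely different and potentially workable route (and the slack from $432$ versus $36$ would help), but as it stands the case analysis must be corrected and the compensation argument actually constructed and verified before it yields the theorem; note that even the degree-$\le3$ case, where your $G$ reduces to $2\sum_{i<j}x_ix_j-2\sum_ix_i$ per vertex, already requires an argument of the same delicacy as the paper's Lemma~\ref{lem:1251}.
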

The above inequality holds with equality at the point $(1/2, 0) \in \Delta_{\cT}(5)$, but we believe that it is not tight for $p \in \Delta_{\cT}(5)$ such that $p_2 > 0$. We discuss tightness in more detail in Section \ref{sec:5profile}.
We end the paper with a list of open problems in Section \ref{sec:open}.

\section{Convexity of the $k$-profiles limit set} \label{sec:conv}
In this Section we prove Theorem \ref{th:conv}. We first explain how to ``glue" two trees, and then we show how gluing allows us to generate convex combinations of tree profiles.
\newline

\noindent
\textbf{Step 1: the gluing operation.}
If $T$ and $S$ are trees, we define $T \boxtimes_k S$ as follows. This is a tree which consists of a copy of $T$, a copy of $S$ and a $(k-1)$-vertex path that connects some arbitrary leaf $x$ in $T$ to an arbitrary leaf $y$ in $S$. In other words, we add to $S$ and $T$ a path $x=z_0,\ldots,z_k=y$ where $z_1,\ldots,z_{k-1}$ are new vertices. The resulting tree depends of course on the choice of the two leaves $x$ and $y$, but we ignore this issue, since this will not affect anything that is said below.

We denote by $D(K)$ the largest vertex degree in a given tree $K$. The following inequalities are easy to verify:
\begin{align}
c(T^k_i, T) + c(T^k_i, S) \leq c(T^k_i, T \boxtimes_k S) \leq c(T^k_i, T) + c(T^k_i, S) + k D(T)^{k-2} + k D(S)^{k-2} , \label{eq:conv2}
\end{align}
and consequently
\begin{align}
& Z_k(T) + Z_k(S) \leq Z_k(T \boxtimes_k S) \leq Z_k(T) + Z_k(S) + k N_k D(T)^{k-2} + k N_k D(S)^{k-2} . \label{eq:conv1}
\end{align}
We define by induction $T^{\boxtimes_k \ell} = T^{\boxtimes_k (\ell -1)} \boxtimes_k T$ (with $T^{\boxtimes_k 1} = T$). Observe that $D(T^{\boxtimes_k \ell}) = D(T)$ and thus using \eqref{eq:conv2} and \eqref{eq:conv1} one has 
\begin{align}
& \ell c(T^k_i, T) \leq c(T^k_i, T^{\boxtimes_k \ell}) \leq \ell c(T^k_i, T) + 2k (\ell-1) D(T)^{k-2} , \label{eq:conv4}\\
& \ell Z_k(T) \leq Z_k(T^{\boxtimes_k \ell}) \leq \ell Z_k(T) + 2 k N_k (\ell-1) D(T)^{k-2} . \label{eq:conv3}
\end{align}
\newline

\noindent
\textbf{Step 2: convex combinations by gluing.} Let $p, q \in \Delta_{\cT}(k)$. Namely, there exists two sequences of trees $T_n$ and $S_n$ such that 
$$|T_n|, |S_n| \xrightarrow[n \rightarrow \infty]{} \infty, \, \text{and} \, (p^{(k)}(T_n), p^{(k)}(S_n)) \xrightarrow[n \rightarrow \infty]{} (p,q) .$$
Now, given $\lambda \in (0,1)$, we want to construct a sequence of trees $R_n$ such that 
$$|R_n| \xrightarrow[n \rightarrow \infty]{} \infty, \, \text{and} \, p^{(k)}(R_n) \xrightarrow[n \rightarrow \infty]{} \lambda p + (1-\lambda) q .$$
First let $\alpha_n/\beta_n$ be a sequence of rational numbers that converges to $\lambda$. We correspondingly define the sequence of trees $R_n$ via:
$$R_n = T_n^{\boxtimes_k [\alpha_n Z_k(S_n)]} \boxtimes_k S_n^{\boxtimes_k [(\beta_n - \alpha_n) Z_k(T_n)]} .$$
Using \eqref{eq:conv1} and \eqref{eq:conv3} one immediately obtains
\begin{align}
& \beta_n Z_k(T_n) Z_k(S_n) \notag \\
& \leq Z_k(R_n) \label{eq:conv} \\
& \leq \beta_n Z_k(T_n)Z_k(S_n) + 2 k N_k \alpha_n Z_k(S_n) D(T_n)^{k-2} + 2 k N_k (\beta_n - \alpha_n) Z_k(T_n) D(S_n)^{k-2} . \notag
\end{align}
Now the key observation is that
\begin{equation} \label{eq:conv5}
D(T_n)^{k-2} = o(Z_k(T_n)) .
\end{equation}
Indeed, $Z_k(T_n) \geq {D(T_n) \choose k-1}$ follows by counting $k$-vertex stars rooted at the highest degree vertex in $T_n$, which yields equation \eqref{eq:conv5} if $D(T_n) \to \infty$. On the other hand, if $D(T_n)$ is bounded then \eqref{eq:conv5} is also clearly true since $Z_k(T_n) \to \infty$.

Using \eqref{eq:conv5} one can rewrite \eqref{eq:conv} as
$$Z_k(R_n) = \beta_n Z_k(T_n) Z_k(S_n) + o(\beta_n Z_k(T_n) Z_k(S_n)) .$$
Similarly using \eqref{eq:conv2} and \eqref{eq:conv4} we obtain
$$c(T_i^k, R_n) = \alpha_n Z_k(S_n) c(T_i^k, T_n) + (\beta_n - \alpha_n) Z_k(T_n) c(T_i^k, S_n) + o(\beta_n Z_k(T_n) Z_k(S_n)) .$$
We combine these two identities and conclude that
$$( p^{(k)}(R_n) )_i = \frac{c(T_i^k, R_n)}{Z_k(R_n)} = (1+o(1)) \left[ \frac{\alpha_n}{\beta_n} \frac{c(T_i^k, T_n)}{Z_k(T_n)} + \left( 1 - \frac{\alpha_n}{\beta_n} \right) \frac{c(T_i^k, S_n)}{Z_k(S_n)} \right] + o(1) \to \lambda p_i + (1-\lambda) q_i ,$$
as claimed.

\section{On Stars and Paths} \label{sec:sp}
In this Section we prove Theorem \ref{th:12k}. We use the shorthand $P_k(T)=c(T^k_1, T)$ and $S_k(T)=c(T^k_2, T)$, and we also omit the reference to $T$ whenever it is clear from context. Before delving into the proof let us show why the exponential decrease in $k$ is unavoidable. 
A $d$-{\em millipede} is a tree where all non-leaf vertices reside on a single path and they have degree $d+2$ each. See Figure \ref{fig:dmilli} for an illustration. The number of non-leaves is called the $d$-millipede's {\em length}. 
\begin{figure}
\begin{center}
\begin{tikzpicture}[scale=2]
\draw (0,0) -- (1,0) -- (2,0);
\draw[dashed] (2.2,0) -- (3.3,0);
\draw (3.5,0) -- (4.5,0) -- (5.5,0);
\draw (1,0) -- (1.3,-1);
\draw (1,0) -- (0.7,-1);
\draw (2,0) -- (2.3,-1);
\draw (2,0) -- (1.7,-1);
\draw (3.5,0) -- (3.2,-1);
\draw (3.5,0) -- (3.8,-1);
\draw (4.5,0) -- (4.2,-1);
\draw (4.5,0) -- (4.8,-1);
\fill (0,0) circle (0.05);
\fill (1,0) circle (0.05);
\fill (2,0) circle (0.05);
\fill (3.5,0) circle (0.05);
\fill (5.5,0) circle (0.05);
\fill (4.5,0) circle (0.05);
\fill (3.2,-1) circle (0.05);
\fill (3.8,-1) circle (0.05);
\fill (4.2,-1) circle (0.05);
\fill (4.8,-1) circle (0.05);
\fill (0.7,-1) circle (0.05);
\fill (1.3,-1) circle (0.05);
\fill (2.3,-1) circle (0.05);
\fill (1.7,-1) circle (0.05);
\draw[dashed] (0.8,-1) -- (1.2,-1);
\draw[dashed] (1.8,-1) -- (2.2,-1);
\draw[dashed] (3.3,-1) -- (3.7,-1);
\draw[dashed] (4.2,-1) -- (4.7,-1);
\draw[decorate, decoration={brace, mirror, amplitude=5pt}]
(0.5,-1.2) -- (1.5,-1.2) node[below, midway, yshift=-0.25cm] {$d$};
\end{tikzpicture}
\end{center}
\caption{A $d$-millipede.}
\label{fig:dmilli}
\end{figure}
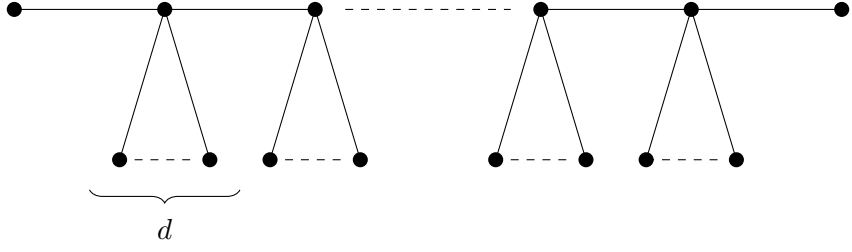
We denote by $T_n$ the $(k-4)$-millipede of length $n$ with $k$ even. Also, $R_k$ is the $\frac{k-4}{2}$-millipede of length $2$. It is easy to see that for $k \geq 6$,
$$Z_{k}(T_n) \geq c(R_k, T_n) \geq 2 (n-2) {k-3 \choose (k-2)/2} \geq (n-2) (3/2)^{k/2} ,$$
and
$$S_k(T_n) = 0, \qquad P_k(T_n) \leq n (k-3)^2 .$$
Thus the limiting profile that corresponds to the sequence $(T_n)$ satisfies
\begin{equation} \label{eq:addedafterreview}
p_1 + p_2 \leq \frac{(k-3)^2}{(3/2)^{k/2}} .
\end{equation}
For $k>3$, let $\cP(k)$ be the projection of $\Delta(k)$ on the first two coordinates, that is
$$\cP(k) = \{(p_1,p_2), p \in \Delta_{\cT}(k)\} .$$
As a side note we also observe that the above inequality yields:
$$\overline{{\cup_k \cP(k)}}= \{x \in \R_+^2 :  x_1 + x_2 \leq 1\} ,$$
where $\overline{A}$ denotes the closure of a set $A$. Indeed $(1,0)$ and $(0,1)$ are always in $\cP(k)$, \eqref{eq:addedafterreview} shows that for $k$ large enough one can find a point arbitrarily close to $(0,0)$, and thus using the convexity of $\cP(k)$ (Theorem \ref{th:conv}) one obtains the above set equality.

We now turn to the proof of Theorem \ref{th:12k}. We repeatedly use the following obvious result which we state without a proof.
\begin{lemma} \label{lem:obv}
A tree with maximal degree $D$ has at most $k N_k D^{k-1}$ $k$-vertex subtrees that contain a given vertex.
\end{lemma}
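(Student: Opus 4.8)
The plan is to count the subtrees in question by first classifying them according to their ``shape'' as seen from the distinguished vertex, and then bounding the number of subtrees of each shape. Fix a tree $T$ with maximal degree $D$ and a vertex $v$. Every $k$-vertex subtree $S$ of $T$ with $v \in S$, once rooted at $v$, becomes a rooted tree on $k$ vertices, and I will group the subtrees by the isomorphism type of this rooted tree. The first ingredient is that the number of isomorphism types of rooted trees on $k$ vertices is at most $k N_k$: each such rooted tree forgets its root to give one of the $N_k$ unrooted $k$-vertex trees, and an unrooted $k$-vertex tree admits at most $k$ inequivalent rootings. So it suffices to show that for each fixed rooted-tree type $\tau$, the number of vertex subsets $S \ni v$ with $|S| = k$ for which $T[S]$ rooted at $v$ is isomorphic to $\tau$ is at most $D^{k-1}$.

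For that bound I would fix a BFS ordering $u_0 = \mathrm{root}, u_1, \dots, u_{k-1}$ of the vertices of $\tau$, so that for $j \geq 1$ the parent of $u_j$ lies among $u_0, \dots, u_{j-1}$. Any admissible subset $S$ is the image of an embedding $\phi$ of $\tau$ into $T$ with $\phi(u_0) = v$ (take the isomorphism realizing $T[S] \cong \tau$, followed by the inclusion $S \hookrightarrow T$). Now count such embeddings greedily: $\phi(u_0) = v$ is forced, and once $\phi(u_0), \dots, \phi(u_{j-1})$ are chosen, the vertex $\phi(u_j)$ must be a neighbor in $T$ of $\phi(\text{parent of }u_j)$, because the edge of $\tau$ joining $u_j$ to its parent must map to an edge of $T[S] \subseteq T$. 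Hence there are at most $D$ choices for each of the $k-1$ non-root vertices, so at most $D^{k-1}$ embeddings in all; since distinct admissible subsets have distinct images, there are at most $D^{k-1}$ of them.

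Multiplying the per-type bound $D^{k-1}$ by the at most $k N_k$ rooted-tree types yields the claimed bound $k N_k D^{k-1}$. The only step needing a little care — and the one I would expect to trip up a careless write-up — is the passage from subsets to embeddings: one must note that every admissible $S$ does arise as the image of at least one embedding fixing $v \mapsto v$ (true essentially by the definition of the shape $\tau$), and that the greedy ``neighbor of the parent's image'' count is a legitimate upper bound even though it ignores injectivity, since non-injective maps only inflate the count. Everything else is routine bookkeeping, which is presumably why the authors state the lemma without proof.
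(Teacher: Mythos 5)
Your proof is correct. The paper states this lemma without any proof, and your argument is exactly the natural one behind the stated bound: the factor $k N_k$ accounts for the at most $k$ rootings of each of the $N_k$ unrooted $k$-vertex tree types, and the factor $D^{k-1}$ comes from the greedy parent-to-child embedding count, with the passage from subsets to embeddings handled properly (every admissible subset is the image of at least one root-preserving embedding, and dropping injectivity only inflates the count).
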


Lemma \ref{lem:12k} is an enumerative analog of the probabilistic statement of Theorem \ref{th:12k} which applies when $S_k=0$. In Lemma \ref{lem:12k2} we deal with the case of $S_k \geq 0$, which then yields Theorem \ref{th:12k}.
\begin{lemma} \label{lem:12k}
If $D(T)\le k-2$ for some tree $T$, then
$$Z_k \leq k N_k (k-2)^{k-1} P_k + k N_k (k-2)^{2k-2} .$$
\end{lemma}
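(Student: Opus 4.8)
The plan is to bound $Z_k$, the total number of $k$-vertex subtrees of $T$, by relating every $k$-vertex subtree to a nearby copy of the path $T_1^k$. The hypothesis $D(T)\le k-2$ is exactly what forces $T$ to be ``long and thin'': since no vertex has degree $\ge k-1$, the tree cannot be concentrated around a few hubs, so it must have many vertices, and in fact many vertices that are deep inside long paths. I would first dispose of a trivial case: if $|T|$ is small (say $|T| \le$ some explicit polynomial threshold like $(k-2)^{k-1}$), then $Z_k$ is bounded by a constant depending only on $k$, absorbed into the additive term $k N_k (k-2)^{2k-2}$; so assume $T$ is large.

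For the main case, the core claim is that $T$ contains at least $Z_k / (k N_k (k-2)^{k-1})$ vertices, and that from a positive fraction of these one can read off a distinct copy of $P_k$. Concretely: by Lemma~\ref{lem:obv}, each vertex lies in at most $k N_k (k-2)^{k-1}$ $k$-vertex subtrees, so $|T| \cdot k N_k (k-2)^{k-1} \ge Z_k$ (each $k$-subtree is counted $k$ times, once per vertex, and we are overcounting). Hence $|T| \ge Z_k / (k N_k (k-2)^{k-1})$ — this already gives the $P_k$-free-like bound if we can show $P_k \gtrsim |T|$. To get $P_k$ large: since $D(T)\le k-2$ and $T$ is a tree with $|T|$ vertices, a BFS/DFS argument shows $T$ has diameter (indeed a path) of length at least roughly $\log_{k-2}|T|$, which is far too weak; instead I would argue that $T$ has at least $|T|/(k-2)^{?}$ many vertices $v$ for which the ball of radius $k$ around $v$ contains an induced path of length $k$ through $v$ — equivalently, count vertices that are ``internal'' to some length-$(k-1)$ geodesic.

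A cleaner route: take a longest path (a diametral path) $Q$ in $T$. Every vertex of $T$ is within distance $|Q|$ of $Q$, and more usefully, root $T$ on $Q$ and observe that the total number of vertices at distance $\ge 1$ from $Q$ is controlled by $|Q|$ and $D(T) \le k-2$ only if depths are bounded, which they need not be. So instead I would directly lower-bound $P_k$ by the number of edges $(u,v)$ such that both $u$ and $v$ have a ``long enough'' pendant path on their far sides: more simply, $P_k(T) \ge$ (number of paths of $k$ vertices) and one shows that in \emph{any} tree with $n$ vertices and max degree $\le k-2$, the number of $k$-vertex paths is at least $n/(k-2)^{k}$ or so — because greedily each vertex is the endpoint of some maximal path of length $\ge k$ once $n$ is large, and maximal paths of length $\ge k$ contain $\ge 1$ sub-path of $k$ vertices, while each $k$-path is contained in boundedly many maximal paths. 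Combining $P_k \gtrsim |T|/(k-2)^{c}$ with $|T| \ge Z_k/(k N_k (k-2)^{k-1})$ yields the desired inequality after tracking constants.

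The main obstacle is the second estimate, $P_k \gtrsim |T|$ up to the stated polynomial-in-$(k-2)$ factor: it requires showing that a tree with bounded max degree and many vertices has proportionally many $k$-vertex paths, which is intuitive but needs a careful charging argument (e.g.\ assign to each vertex $v$ a canonical geodesic segment of $k$ vertices through or near $v$, and bound how often each segment is charged). The first estimate via Lemma~\ref{lem:obv} is immediate. I expect the proof to choose the partition point between ``$|T|$ small'' and ``$|T|$ large'' precisely so that the small case is swallowed by $k N_k (k-2)^{2k-2}$ and the large case by $k N_k (k-2)^{k-1} P_k$, matching the two terms on the right-hand side.
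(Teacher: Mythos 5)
Your plan is a genuinely different route from the paper's: the paper proves the lemma by induction on the number of vertices, deleting a leaf and using that $n>(k-2)^{k-1}$ forces diameter at least $2(k-2)$, so each deleted leaf destroys at least one $k$-vertex path while destroying at most $kN_k(k-2)^{k-1}$ $k$-vertex subtrees (Lemma~\ref{lem:obv}); you instead propose a global double count, $Z_k\lesssim n$ versus $P_k\gtrsim n$. That skeleton is sound and can be made to work, but as written it has two concrete soft spots. First, your charging via maximal paths is broken: a $k$-vertex path sitting deep inside the tree can be contained in \emph{unboundedly} many maximal paths (one for each pair of leaves reachable from its two ends), so "each $k$-path is contained in boundedly many maximal paths" fails. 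The clean repair is the canonical-segment idea you mention in passing: a tree of radius $r$ and maximum degree at most $k-2$ has at most $(k-2)^{r+1}$ vertices, so $n>(k-2)^{k-1}$ forces radius $\ge k-1$; hence every vertex $v$ has eccentricity $\ge k-1$ and the first $k$ vertices of a geodesic from $v$ to a farthest vertex form a $k$-vertex path with endpoint $v$. Since each $k$-vertex path has only two endpoints, this gives $P_k\ge n/2$ (certainly $\ge n/k$), not merely $n/(k-2)^{c}$.

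Second, your constants as stated do not reach the lemma. Combining $P_k\gtrsim n/(k-2)^k$ with $n\ge Z_k/(kN_k(k-2)^{k-1})$ only yields something like $Z_k\le kN_k(k-2)^{2k-1}P_k+\cdots$, which is weaker than the claimed coefficient $kN_k(k-2)^{k-1}$. To hit the stated bound you must keep the factor-$k$ saving you note parenthetically: summing Lemma~\ref{lem:obv} over vertices counts each $k$-subtree $k$ times, so $Z_k\le nN_k(k-2)^{k-1}$. Together with $P_k\ge n/k$ this gives $Z_k\le kN_k(k-2)^{k-1}P_k$ outright whenever $n>(k-2)^{k-1}$, while $n\le(k-2)^{k-1}$ gives $Z_k\le N_k(k-2)^{2k-2}$, which is absorbed by the additive term exactly as you intended. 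With these two repairs your argument is correct, non-inductive, and in fact shows the additive term is only needed for small trees; but without them the key quantitative step ($P_k\gtrsim n$ with the right constant) is not established.
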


\begin{proof}
For trees with $n\le (k-2)^{k-1}$ vertices this inequality clearly follows from Lemma \ref{lem:obv}. For $n > (k-2)^{k-1}$ we proceed by induction. Clearly for this range of $n$, the tree's diameter must be at least $2(k-2)$. In other words it must contain a copy $P$ of $P_{2(k-2)+1}$. Let the tree $T'$ be obtained by removing a leaf $x$ from $T$. This eliminates at least one $k$-vertex path, namely the path from $x$ toward $P$ possibly proceeding toward $P$'s furthest end. In other words:
$$P_k(T) \geq P_k(T')+1 .$$ Furthermore by Lemma \ref{lem:obv}
$$Z_k(T) \leq Z_k(T') + k N_k(k-2)^{k-1} .$$
Applying the induction hypothesis to $T'$ yields
$$Z_k(T') \leq k N_k (k-2)^{k-1} P_k(T') + k N_k (k-2)^{2k-2} ,$$
together with the two above inequalities this gives the same inequality for $T$.
\end{proof}

\begin{lemma} \label{lem:12k2}
Every tree satisfies
$$Z_k \leq N_k k^{2k} (P_k+2S_k+1) .$$
\end{lemma}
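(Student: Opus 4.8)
The plan is to bound $Z_k$ by splitting the $k$-vertex subtrees of $T$ according to the set $H$ of vertices of degree at least $k-1$, writing $Z_k=A+B$ where $A$ counts those subtrees whose vertices all avoid $H$ and $B$ counts those meeting $H$. For $A$: a $k$-subtree avoiding $H$ is a subtree of the forest $T\setminus H$ and so lies inside a single connected component $C$ of it, and every such $C$ is a tree of maximum degree at most $k-2$. I would therefore apply Lemma~\ref{lem:12k} to each component $C$ and sum, using that distinct components yield distinct $k$-paths of $T$, to obtain
\[
A\ \le\ kN_k(k-2)^{k-1}P_k\ +\ kN_k(k-2)^{2k-2}\,m ,
\]
where $m$ is the number of components of $T\setminus H$. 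Since deleting a vertex set $H$ from a tree produces $1-|H|+|\{\text{edges meeting }H\}|\le 1+\sum_{v\in H}\deg_T v$ components, and since $S_k=\sum_{v\in H}\binom{\deg_T v}{k-1}$ (a $k$-star has a well-defined centre, necessarily in $H$) while $\binom{d}{k-1}\ge d/(k-1)$ for $d\ge k-1$, we get $m\le 1+(k-1)S_k$. Comparing exponents then gives $A\le N_k k^{2k}(P_k+S_k+1)$.

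For $B$ the crucial point is the choice of how to charge each subtree. Given a $k$-subtree $S$ meeting $H$, charge it to a vertex $w\in S\cap H$ of maximum degree in $T$ (ties broken arbitrarily). With this choice every vertex of $S$ has degree at most $\deg_T w$ — the vertices of $S$ in $H$ by maximality of $w$, the others because their degree is at most $k-2<\deg_T w$. Hence $S$ is a $k$-subtree through $w$ of the component of $w$ in the subgraph of $T$ induced by the vertices of degree at most $\deg_T w$; that component is a tree of maximum degree at most $\deg_T w$, so by Lemma~\ref{lem:obv} the number of $k$-subtrees charged to a fixed $w$ is at most $kN_k(\deg_T w)^{k-1}$. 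Summing over $w\in H$, and using $d^{k-1}\le(k-1)^{k-1}\binom{d}{k-1}$ (which follows from $\binom{d}{k-1}\ge(d/(k-1))^{k-1}$) together with $\sum_{w\in H}\binom{\deg_T w}{k-1}=S_k$, this yields
\[
B\ \le\ kN_k(k-1)^{k-1}\sum_{w\in H}\binom{\deg_T w}{k-1}\ \le\ N_k k^{2k}\,S_k .
\]
Adding the two estimates gives $Z_k=A+B\le N_k k^{2k}(P_k+2S_k+1)$.

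I expect the only delicate step to be the bookkeeping for $B$, and specifically the realisation that one must charge a subtree to its \emph{maximum}-degree high-degree vertex rather than to an arbitrary one: if $S$ is charged to some $w\in S\cap H$ but a branch of $S$ at $w$ contains a vertex of far larger degree, the number of such $S$ is not controlled by $\deg_T w$, and the sum over $H$ would only give a bound quadratic in $S_k$. Once the charging is set up correctly the count reduces cleanly to Lemma~\ref{lem:obv}, and the remaining ingredients — the decomposition $Z_k=A+B$, the use of Lemma~\ref{lem:12k}, the count of components of $T\setminus H$, and the elementary binomial inequalities — are routine.
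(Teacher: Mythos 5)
Your proof is correct, but it takes a genuinely different route from the paper. The paper proves the lemma by induction on the number of vertices, after first disposing of trees with at most $k^k$ vertices by a direct degree count; the inductive statement is strengthened to $Z_k \leq N_k k^{2k}(P_k+1)\ds1\{P_k \geq 1\} + 2N_k k^{2k} S_k$, and the induction step peels off a single maximum-degree vertex $v$ with $d(v)\geq k-1$, comparing $Z_k$, $S_k$ and the quantity $(1+P_k)\ds1\{P_k\geq 1\}$ of $T$ with those of the components of $T\setminus\{v\}$ (the case $D(T)\leq k-2$ being Lemma~\ref{lem:12k}, exactly as in your Case for $A$ when $H=\emptyset$). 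You instead give a one-shot, non-inductive decomposition: you classify the $k$-subtrees by whether they avoid or meet the set $H$ of vertices of degree at least $k-1$, handle the avoiding ones by applying Lemma~\ref{lem:12k} to each component of $T\setminus H$ together with the component count $m\leq 1+(k-1)S_k$, and handle the meeting ones by charging each subtree to its maximum-degree vertex in $H$ and invoking Lemma~\ref{lem:obv} inside the induced subgraph of vertices of degree at most $\deg_T w$ (the charging to the \emph{maximum}-degree vertex is indeed the point that makes the per-vertex count $kN_k(\deg_T w)^{k-1}$ legitimate, and the ensuing estimates $\binom{d}{k-1}\geq (d/(k-1))^{k-1}$ give $B\leq N_k k^k S_k$). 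All steps check out, and the constants close with room to spare, in fact yielding the slightly sharper $Z_k\leq N_kk^{2k}(P_k+S_k+1)+N_kk^kS_k$. What the paper's induction buys is the refined statement with the indicator $\ds1\{P_k\geq1\}$ (removing the additive $+1$ when the tree has no $k$-path); what your argument buys is a shorter, structurally transparent proof that avoids both the strengthened inductive hypothesis and the separate treatment of small trees.
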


\begin{proof}
First observe that if $n \leq k^k$ then by (a variant of) Lemma \ref{lem:obv}:
\begin{eqnarray*}
Z_k & \leq & \sum_{u : d(u) \leq k-2} k N_k d(u)^{k-1} + \sum_{u : d(u) \geq k-1} k N_k d(u)^{k-1} \\
& \leq & N_k k^{2k} + \sum_{u : d(u) \geq k-1} k N_k (k-1)^{k-1} {d(u) \choose k-1} \\
& \leq & N_k k^{2 k} + N_k k^k  S_k,
\end{eqnarray*}
as needed. For larger trees we prove the following stronger inequality by induction on the number of vertices:
$$Z_k \leq N_k k^{2k} \left(P_k + 1\right) \ds1\{P_k \geq 1\} +  2 N_k k^{2k} S_k.$$
Clearly the expression $\ds1\{P_k \geq 1\}$ captures the information whether or not $T$'s diameter is at least $k-1$. The base case $n=k^k$ follows since necessarily $P_k \geq 1$ or $S_k \geq 1$. The induction step has two cases:

\textbf{Case 1:} If $D(T)\le k-2$, then Lemma \ref{lem:12k} yields the inequality, since $P_k \geq 1$.

\textbf{Case 2:} Let $v$ be the vertex of largest degree $d \geq k-1$, and let $T_1, \hdots, T_d$ be the trees of the forest $T\setminus\{v\}$. By Lemma \ref{lem:obv}
$$Z_k(T) \leq \sum_{i=1}^d Z_k(T_i) + k N_k d^{k-1} .$$
Furthermore
$$S_k(T) \geq \sum_{i=1}^d S_k(T_i) + {d \choose k-1} \geq \sum_{i=1}^d S_k(T_i) + \left(\frac{d}{k-1}\right)^{k-1} ,$$
and 
$$(1+P_k(T)) \ds1\{P_k(T) \geq 1\} \geq \sum_{i=1}^d (1+P_k(T_i)) \ds1\{P_k(T_i) \geq 1\}.$$
To see why the last inequality holds true, observe first that it is trivial if $\sum_{i=1}^d \ds1\{P_k(T_i)\geq1\} \in \{0,1\}$. Furthermore if $\sum_{i=1}^d \ds1\{P_k(T_i)\geq1\} \geq 2$, then for each $i$ such that $P_k(T_i) \geq 1$, one can find a path in $T$ containing both $v$ and vertices from $T_i$, which means that in this case one even has $P_k(T) \geq \sum_{i=1}^d (1+P_k(T_i)) \ds1\{P_k(T_i) \geq 1\}$.

Combine the three above displays and apply induction to the $T_i$'s to conclude:
\begin{eqnarray*}
Z_k(T) & \leq & \sum_{i=1}^d Z_k(T_i) + k N_k d^{k-1} \\
& \leq & N_k k^{2k} \sum_{i=1}^d \left(P_k(T_i) + 1\right) \ds1\{P_k(T_i) \geq 1\} + 2 N_k k^{2k} \sum_{i=1}^d S_k(T_i) + k N_k d^{k-1}\\
& \leq & N_k k^{2k} (1+P_k(T)) \ds1\{P_k(T) \geq 1\} + 2 N_k k^{2k} S_k(T),
\end{eqnarray*}
which concludes the proof.
\end{proof}

\section{$5$-profiles} \label{sec:5profile}

Clearly $\Delta(5)$ is entirely determined by $\cP(5)$. In this Section we prove Theorem \ref{th:125} which improves Theorem \ref{th:12k} for $k=5$. 

Before we embark on the proof we show that millipedes generate a 'large' set of points in $\cP(5)$. To simplify notation, let $P(T)=c(T^5_1, T)$, $S(T)=c(T^5_2, T)$ and $Y(T)=c(T^5_3, T)$ (note that $T^5_3$ has the $Y$-shape). We also omit the dependency on $T$ whenever it is clear from context. For a $d$-millipede of length $n$ we get the following expressions:
\begin{eqnarray*}
S & = & n {d+2 \choose 4} , \\
P & = & (n-2) (d+1)^2 , \\
Y & = & 2 (n-2) {d+1 \choose 2} (d+1) + 2 {d+1 \choose 2} (d+1) = (n-1) (d+1)^2 d ,\\
S + Y + P & = & n {d+2 \choose 4} + (n-2) (d+1)^3 + (d+1)^2 d .
\end{eqnarray*}
In particular for fixed $d$ and $n \to \infty$, we get the following point in $\cP(5)$:
\begin{equation} \label{eq:md}
m_d = \left(\frac{(d+1)^2}{{d+2 \choose 4} + (d+1)^3} , \frac{{d+2 \choose 4}}{{d+2 \choose 4} + (d+1)^3}\right) .
\end{equation}
Thus by convexity we have
\begin{equation} \label{eq:inclusion}
\cP(5) \supseteq \conv(\{(0,1)\}\cup\{ m_d, d \geq 0\}) .
\end{equation}
We cannot rule out the possibility that this is, in fact an equality.
This inclusion and the inequality from Theorem \ref{th:125} are illustrated in Figure \ref{fig:pic}.
\newline

\begin{figure}
\begin{center}
\includegraphics[scale = 0.5]{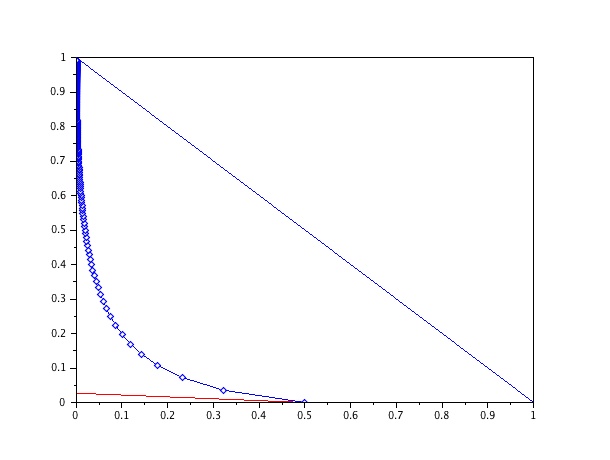}
\end{center}
\caption{The equation of the red line is $y=\frac{1-2x}{37}$. In blue: the polygonal curve connecting consecutive $m_d, d\geq1$ of equation \eqref{eq:md} as well as $(0,1)$ to $(1,0)$. By Theorem \ref{th:125} the set $\cP(5)$ lies above the red line and by Theorem \ref{th:conv} it contains the convex domain bounded by the blue lines.}
\label{fig:pic}
\end{figure}

Our proof of Theorem \ref{th:125} proceeds along the route that we took in proving Theorem \ref{th:12k}. Now, however, we are much more careful with the details. Lemma \ref{lem:1251}, a counterpart of Theorem \ref{th:125} gives an inequality on the unnormalized quantities when $S=0$. The general case $S \geq 0$ is handled in Lemma \ref{lem:1252} which yields Theorem \ref{th:125}.

\begin{lemma} \label{lem:1251}
If $D(T)\le 3$, then
$$Y \leq P + 4 ,$$
with equality if and only if $T$ is a $1$-millipede.
\end{lemma}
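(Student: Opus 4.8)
The plan is to prove the inequality and the equality case together, by induction on $|T|$, following the leaf‑removal strategy of Lemma~\ref{lem:12k} but keeping exact account of the quantities involved. Since $D(T)\le 3$ there are no $5$‑vertex stars, so $Z_5(T)=P(T)+Y(T)$ and the only $5$‑vertex subtrees are paths and copies of $T^5_3$. The basic tool is this: if $x$ is a leaf of $T$ with neighbour $v$ and $T'=T\setminus x$, then every $5$‑vertex subtree of $T$ not contained in $T'$ contains $x$ as a leaf, and enumerating these gives
\[
P(T)-P(T')=r_3(v),\qquad Y(T)-Y(T')=\#\{c\sim v:\ c\ne x,\ \deg c=3\}+\mathbf 1[\deg v=3]\,r_2(v),
\]
where $r_j(v)$ denotes the number of vertices at distance exactly $j$ from $v$. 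Writing $f(T)=P(T)-Y(T)+4$, we must show $f(T)\ge 0$ with equality exactly for $1$‑millipedes. A direct computation — already implicit in the millipede formulas $P=4(n-2)$, $Y=4(n-1)$ preceding the lemma — gives $f=0$ for every $1$‑millipede (this is the "$\Leftarrow$'' direction), and one settles the finitely many trees with $D\le 3$ and $|T|\le 6$ by inspection: these are only the paths, the spiders $S_{1,1,3}$ and $S_{1,2,2}$, and the double star $S_{2,2}$, and $f>0$ for all of them except $S_{2,2}$, which is the $1$‑millipede of length $2$.

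For the inductive step let $|T|\ge 7$; if $T$ is a $1$‑millipede we are done by the computation above, so assume it is not. The claim I would establish is that a non‑millipede $T$ (with $D\le 3$) has a leaf $x$ for which $P(T)-P(T')\ge Y(T)-Y(T')$, the inequality being \emph{strict} whenever $T'=T\setminus x$ is itself a $1$‑millipede. Granting this, $f(T)=f(T')+\bigl((P(T)-P(T'))-(Y(T)-Y(T'))\bigr)\ge f(T')\ge 0$ by induction; and if $f(T)=0$ then both summands vanish, so $P(T)-P(T')=Y(T)-Y(T')$ and $f(T')=0$, whence $T'$ is a $1$‑millipede by induction — but then the strictness clause is violated, a contradiction, so $f(T)>0$ for every non‑millipede. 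To produce the good leaf one examines the ends of a longest path $u_0-u_1-\cdots-u_m$ (here $m\ge 4$ since $|T|\ge 7$ and $D\le 3$): maximality forces $\deg u_1\in\{2,3\}$, with $u_1$'s third neighbour a leaf when $\deg u_1=3$, and a short case analysis shows that deleting $u_0$ works unless $T$ "begins like a millipede'' at $u_0$ (that is, $u_1$ carries two pendant leaves and $u_2$ has degree $3$ with one pendant leaf); in that exceptional case one instead deletes a pendant leaf near the other end $u_m$, or — if $T$ begins like a millipede at both ends — deletes the pendant leaf at a "deficient'' end, verifying in each configuration via the distance counts $r_2,r_3$ that the destroyed $P_5$'s outnumber (and, against a millipede $T'$, strictly outnumber) the destroyed $Y$'s.

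I expect the main obstacle to be exactly this construction of a good leaf — proving that apart from the $1$‑millipedes there is always a leaf whose removal does not decrease $f$ and strictly increases it when the result is a millipede. The difficulty is genuine: for an honest millipede \emph{every} leaf deletion decreases $f$, as it must, the millipede being the extremal object, so the argument has to exploit, configuration by configuration, precisely how a non‑millipede differs from a millipede near the ends of its longest path, and the bookkeeping of which $5$‑vertex subtrees disappear has to be fine enough to pin down the equality case as well. A secondary but real point requiring care is checking that "$P(T)-P(T')=Y(T)-Y(T')$'' and "$T'$ is a $1$‑millipede'' never co‑occur, since this is what upgrades $f(T)\ge 0$ to $f(T)>0$ off the millipede family.
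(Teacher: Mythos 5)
Your leaf-deletion bookkeeping is fine, but the central claim your induction rests on --- that every non-millipede $T$ with $D(T)\le 3$ has a leaf $x$ with $P(T)-P(T\setminus x)\ \ge\ Y(T)-Y(T\setminus x)$ --- is false, and you yourself flag it as the unproved ``main obstacle,'' so the proposal does not contain a proof of the key step. Concrete counterexample: take the $1$-millipede of length $4$ and subdivide its middle spine edge, giving spine $u_1u_2wu_3u_4$ where $w$ has degree $2$, $u_1,u_4$ carry two pendant leaves each and $u_2,u_3$ one each ($11$ vertices, not a millipede). Here $P=8$ and $Y=10$, so $P-Y+4=2>0$ and the lemma holds; but deleting a leaf at $u_1$ (or $u_4$) destroys $1$ path and $3$ copies of $T^5_3$, while deleting the leaf at $u_2$ (or $u_3$) destroys $2$ paths and $4$ copies. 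Thus \emph{every} leaf has $P$-drop strictly smaller than $Y$-drop, and since the defect (the degree-$2$ vertex $w$) sits in the middle of the tree, both ends look exactly like millipede ends, so none of your exceptional-case remedies at the ends of a longest path can apply. Knowing $f(T')\ge 0$ together with a negative drop difference tells you nothing about $f(T)$, so the induction collapses. (Minor further points: your base-case list omits the $5$-vertex tree $T^5_3$ itself, and any repaired induction would have to carry a quantitative hypothesis measuring how far $T'$ is from a millipede, not just $f(T')\ge 0$.)

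For comparison, the paper's proof avoids deletion arguments entirely: it classifies degree-$2$ and degree-$3$ vertices by the degrees of their neighbours, expresses $P$ and $Y$ exactly in these type counts, and combines this with the relations $n_1-n_3=2$ and a double count of edges joining degree-$2$ to degree-$3$ vertices to arrive at the identity \eqref{eq:4}, which writes $P-Y+4$ as a nonnegative integer combination of type counts. The inequality is then immediate, and the equality case falls out by reading off which types must vanish (only leaves and degree-$3$ vertices of types $220$ and $200$ survive, with $n_{200}=2$, forcing a $1$-millipede). If you want to keep a local/inductive flavour, you would in effect need to prove a quantitative version of \eqref{eq:4} anyway --- for instance, a lower bound on $P-Y+4$ in terms of the number of local deviations from millipede structure --- since, as your own observation about honest millipedes shows and the example above confirms, single leaf deletions cannot be controlled uniformly.
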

Note that to prove Theorem \ref{th:125} we will only need the inequality provided by Lemma \ref{lem:1251}.

\begin{proof}
It is immediate that a $1$-millipede satisfies $Y = P + 4$. We prove the inequality in two steps. A third step shows that only $1$-millipedes satisfy $Y = P + 4$.
\newline

\noindent
\textbf{Step 1: a formula for $P - Y$.}
We say that a vertex of degree $3$ has {\em type} $xyz$ with $x, y, z \in \{0,1,2\}$ if its three neighbors have degree $x+1, y+1$, and $z+1$, respectively. The number of vertices of type $xyz$ is denoted $n_{xyz}$. Similarly we define for degree-$2$ vertices the quantity $n_{xy}$.

A straightforward (but slightly painful) calculation yields
$$P = 12 n_{222} + 8 n_{221} + 4 n_{220} + 5 n_{211} + 2 n_{210} + 3 n_{111} + n_{110} + 4 n_{22} + 2 n_{21} + n_{11} ,$$
and
$$Y = 6 n_{222} + 5 n_{221} + 4 n_{220} + 4 n_{211} + 3 n_{210} + 2 n_{200} + 3 n_{111} + 2 n_{110} + n_{100} .$$
Hence
\begin{equation} \label{eq:1}
P - Y =  6 n_{222} + 3 n_{221} + n_{211} - n_{210} - 2 n_{200} - n_{110} - n_{100} + 4 n_{22} + 2 n_{21} + n_{11} .
\end{equation}

\noindent
\textbf{Step 2: double counting.} Let $n_x$ be the number of degree-$x$ vertices. Clearly $n_1 + n_2 + n_3 = n ,$
and by double counting of edges, also $n_1 + 2 n_2 + 3 n_3 = 2(n-1) .$
In particular,
\begin{equation} \label{eq:2}
n_1 - n_3 = 2.
\end{equation}
Next observe that $n_1$ and $n_3$ can easily be expressed in terms of the parameters $n_{xy}$ and $n_{xyz}$. Namely,
\begin{eqnarray*} 
n_3 & = & n_{222} + n_{221} + n_{220} + n_{211} + n_{210} + n_{200} + n_{111} + n_{110} + n_{100} , \\
n_1 & = & n_{220} + n_{210} + 2 n_{200} + n_{110} + 2 n_{100} + n_{20} + n_{10} . 
\end{eqnarray*}
Together with \eqref{eq:2} we find
\begin{equation} \label{eq:3}
- n_{222} - n_{221} - n_{211} + n_{200} - n_{111} + n_{100} + n_{20} + n_{10}  = 2.
\end{equation}
Next adding \eqref{eq:1} to twice \eqref{eq:3} one gets
$$P - Y + 4 =  4 n_{222} + n_{221} - n_{211} - n_{210} - 2 n_{111} - n_{110} + n_{100} + 4 n_{22} + 2 n_{21} + n_{11} + 2 n_{20} + 2 n_{10} .$$
It only remains to show that the right hand side term is non-negative. To this end we count  edges between a degree-$2$ vertex and a degree-$3$ vertex in two ways: Once from the degree-$3$ side and once from the degree-$2$ side
$$n_{221} + 2 n_{211} + n_{210} + 3 n_{111} + 2 n_{110} + n_{100} = 2 n_{22} + n_{21} + n_{20} .$$
This concludes the proof of the inequality stated in the theorem. Note that we have, in fact, showed a more precise statement:
\begin{equation} \label{eq:4}
P - Y + 4 =  4 n_{222} + 2 n_{221} + n_{211} + n_{111} + n_{110} + 2 n_{100} + 2 n_{22} + n_{21} + n_{11} + n_{20} + 2 n_{10} .
\end{equation}

\noindent
\textbf{Step 3: the equality case.} Equation \eqref{eq:4} shows that if $P - Y + 4=0$ then 
$$4 n_{222} + 2 n_{221} + n_{211} + n_{111} + n_{110} + 2 n_{100} + 2 n_{22} + n_{21} + n_{11} + n_{20} + 2 n_{10} = 0.$$
In particular the tree contains no degree-$2$ vertices, and no degree-$3$ vertices of type $222$. In other words, it has only leaves and degree-$3$ vertices of types $220$ and $200$. Moreover, by \eqref{eq:3} in this case $n_{200} = 2$. A straightforward inductive proof shows that the tree must be a $1$-millipede.
\end{proof}

We now adapt Lemma \ref{lem:1251} to the case where $S > 0$. This more general inequality directly implies Theorem \ref{th:125}.

\begin{lemma} \label{lem:1252}
All trees satisfy
$$Y \leq 36 S + P + 4.$$
\end{lemma}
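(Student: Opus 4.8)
The plan is to mimic the structure of the proof of Lemma~\ref{lem:12k2}: handle small trees directly via the counting estimate of Lemma~\ref{lem:obv}, and then run an induction on the number of vertices, splitting at a high-degree vertex. Concretely, first dispose of trees with $D(T)\le 3$: these are covered by Lemma~\ref{lem:1251}, which gives $Y\le P+4\le 36S+P+4$. So from now on we may assume $T$ has a vertex of degree $d\ge 4$.

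\smallskip\noindent
\textbf{Base case / small trees.} For trees with few vertices (say $n\le n_0$ for a suitable constant $n_0$; the bound need only be uniform) one wants $Y\le 36S+P+4$ to hold for trivial reasons. Here $Y=c(T^5_3,T)$ counts copies of the $Y$-shaped tree; each such copy contains a degree-$\ge 3$ vertex, and rooting at the branch vertex shows $Y\le \sum_{u: d(u)\ge 3}\binom{d(u)}{3}\cdot(\text{small factor})$. One then checks that this is absorbed by $36S$ plus the ``$+4$'' once $d(u)\ge 4$ somewhere, and otherwise by Lemma~\ref{lem:1251}. (One may need to be a little careful and, as in Lemma~\ref{lem:12k2}, prove a slightly strengthened statement for the induction — e.g.\ replacing $P+4$ by $(P+4)\ds1\{\text{diam}\ge 4\}$ plus extra slack proportional to $S$ — so that the induction closes; I would set up the exact strengthened inequality after seeing which terms the induction step loses.)

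\smallskip\noindent
\textbf{Induction step.} Let $v$ be a vertex of degree $d\ge 4$ and let $T_1,\dots,T_d$ be the components of $T\setminus\{v\}$. As in Lemma~\ref{lem:12k2} we have $Y(T)\le \sum_i Y(T_i) + (\text{number of }Y\text{'s through }v)$, and the copies of $T^5_3$ using $v$ are $O(d^3)$ in number — more precisely at most $\binom{d}{3}+\binom{d}{2}\cdot(\text{max degree in a }T_i)+\dots$, all bounded by a constant times $d^3$. On the star side, $S(T)\ge \sum_i S(T_i)+\binom{d}{4}\ge \sum_i S(T_i)+\left(\frac{d}{4}\right)^4$, which grows like $d^4$ and hence dominates the $O(d^3)$ surplus once $d$ is large; for $4\le d\le d_1$ (a bounded range) one absorbs the surplus into the base-case constant instead. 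The path term behaves exactly as in Lemma~\ref{lem:12k2}: $P$ is superadditive across the $T_i$ and, when two or more $T_i$ have a long path, $v$ creates extra long paths, so the ``$+4$''-type slack is preserved. Summing the three displays and applying the (strengthened) induction hypothesis to each $T_i$ then yields the claimed inequality for $T$.

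\smallskip\noindent
\textbf{Main obstacle.} The delicate point is matching constants: Lemma~\ref{lem:1251} gives the sharp slope $0$ for the $S=0$ case, and here we must show the penalty for $S>0$ is only $36S$, so the induction cannot afford to lose more than a bounded amount of ``$S$-budget'' at each split. The key inequality is that the number of $Y$-copies through a degree-$d$ vertex (which is roughly $\binom d3$ plus lower-order terms from the degrees inside the $T_i$) is at most $36\binom d4$ plus a constant for every $d\ge 4$; checking that this holds for the smallest relevant degree $d=4$ — where $\binom d3=4$ and $\binom d4=1$ — and accounting for the additional $Y$-copies that bend into the $T_i$ is exactly where the constant $36$ comes from, and getting that bookkeeping right (rather than the asymptotics) will be the crux. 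I expect the rest to be a careful but routine replay of the Lemma~\ref{lem:12k2} argument.
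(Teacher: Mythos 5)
There is a genuine gap here: your proposal is a plan whose central difficulty is exactly the part you defer. You say you would ``set up the exact strengthened inequality after seeing which terms the induction step loses,'' but that strengthened statement is the crux. If you split at a vertex $v$ of degree $d\ge 4$ into components $T_1,\dots,T_d$ and apply the inductive bound $Y(T_i)\le 36S(T_i)+P(T_i)+4$ to each component, the additive constant becomes $4d$, and nothing in your sketch recovers it: the lemma's constant is fixed at $4$, so you cannot ``absorb the surplus into the base-case constant'' --- splits with $4\le d\le d_1$ occur inside arbitrarily large trees, at every level of the induction, not only near the base case. (In Lemma~\ref{lem:12k2} this is precisely what the indicator-strengthened statement and the factor $2$ in front of $S_k$ were engineered to fix; you have no analogue.) Moreover your quantitative claims are off: the number of copies of $T^5_3$ through $v$ is not $O(d^3)$. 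Copies with $v$ as the branch vertex alone number $\binom{d-1}{2}\sum_{u\sim v}(d(u)-1)$, which is of order $d^4$ even when $v$ has maximum degree (and unbounded in terms of $d$ if it does not), and there are comparable contributions with $v$ in the other positions. So the ``key inequality'' you isolate, comparing $\binom{d}{3}$ with $36\binom{d}{4}$, ignores the dominant terms; one would instead have to weigh $Y$-copies through $v$ against the full star gain $\binom{d}{4}+\sum_{u\sim v}\binom{d(u)-1}{3}$, and simultaneously keep the path slack --- bookkeeping you have not done and which is not routine.

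For comparison, the paper avoids this vertex-deletion induction for the $S$-part altogether. It writes $Y$ as a sum over edges, $Y=\sum_{\{u,v\}\in E}\bigl(\binom{d(v)-1}{2}(d(u)-1)+\binom{d(u)-1}{2}(d(v)-1)\bigr)$, splits it into $Y_s$ (edges with both endpoint degrees $\le 3$) and $Y_\ell$ (some endpoint of degree $\ge 4$), and proves $Y_\ell\le 36S$ by a purely local, term-by-term comparison with $S=\frac14\sum_{\{u,v\}\in E}\bigl(\binom{d(u)-1}{3}+\binom{d(v)-1}{3}\bigr)$ --- no induction and no additive constant needed for the large-degree contributions. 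Only $Y_s\le P+4$ is then proved by induction, and there the split is along a single edge at a degree-$\ge4$ vertex, producing two components, so the two $+4$'s are paid for by the path gain $P(T)\ge P(T_1)+P(T_2)+2(d(u)-1)\ge P(T_1)+P(T_2)+4$. That edge-local separation of the ``large-degree'' part of $Y$ is the missing idea in your approach; without it (or a correctly formulated strengthened induction hypothesis in your scheme), your argument does not close.
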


\begin{proof}
First observe the following expressions 
$$Y = \sum_{\{u,v\} \in E} \left( {d(v) - 1 \choose 2} (d(u) -1) + {d(u) - 1 \choose 2} (d(v) -1) \right). $$
We split $Y= Y_s + Y_{\ell}$, where
$${Y_s} = \sum_{\{u,v\} \in E : \max(d(u), d(v)) \leq 3} \left( {d(v) - 1 \choose 2} (d(u) -1) + {d(u) - 1 \choose 2} (d(v) -1) \right), $$
and
$${Y_{\ell}} = \sum_{\{u,v\} \in E : \max(d(u), d(v)) \geq 4} \left( {d(v) - 1 \choose 2} (d(u) -1) + {d(u) - 1 \choose 2} (d(v) -1) \right). $$
The proof deals separately with ${Y_s}$ and ${Y_{\ell}}$.
\newline

\noindent
\textbf{Step 1:} We prove that ${Y_{\ell}} \leq 36 S$ by observing
$$S = \sum_{u \in V} {d(u) \choose 4} = \frac{1}{4} \sum_{u, v : \{u,v\} \in E} {d(u) - 1 \choose 3} = \frac{1}{4} \sum_{\{u,v\} \in E} \left({d(u) - 1 \choose 3} + {d(v) - 1 \choose 3}\right).$$
and making a term-by-term comparison with the expression for ${Y_{\ell}}$. We use the fact that for any nonnegative integers $x \neq 2, y \geq 3$
$$y x (x-1) + x y (y-1) \leq x^2(x-1) + y^2(y-1) \leq 3 (x(x-1)(x-2) + y(y-1)(y-2) ),$$
and furthermore for $x=2$ this inequality (without the intermediate step) is also true.
\newline

\noindent
\textbf{Step 2:}
We prove by induction on the size of the tree that ${Y_s} \leq P + 4$. The base case is trivial. The induction step has three cases:

\textbf{Case 1:} $D(T)\le 3$. The inequality follows readily from Lemma \ref{lem:1251}.

\textbf{Case 2:} There are two neighbors $u,v$ in $T$, where $d(u)\ge 4$ and $v$ is a leaf. Clearly,
$${Y_s}(T) \leq {Y_s}(T'), \ \text{and} \ P(T') \leq P(T)$$
where $T':=T\setminus\{v\}$.
By applying the induction hypothesis to $T'$ we see that ${Y_s}(T') \leq P(T') + 4$ which implies ${Y_s} \leq P + 4$.

\textbf{Case 3:} There is a vertex $u$ in $T$ with $d(u)\ge 4$, and no neighbor of $u$ is a leaf. Let $v$ be a neighbor of $u$ and let $T_1, T_2$ be the two trees of the forest obtained by removing the edge $uv$ and adding a new edge to $v$, where $u$ is in $T_1$ and $v$ in $T_2$. As in Case 2
$${Y_s}(T) \leq {Y_s}(T_1) + {Y_s}(T_2) .$$
Observe that we can assume that $v$ was selected such that $T_2$ has at least $3$ edges, for otherwise $Y_s(T)=0$ and thus the inequality would trivially hold. Indeed if $T_2$ had $2$ edges for all neighbors of $u$, then $T\setminus\{u\}$ would be a matching, and thus any copy of $T^5_3$ in $T$ would have $u$ in its ``middle edge'', which implies $Y_s(T)=0$.

Now clearly if $T_2$ has at least $3$ edges,
$$P(T) \geq P(T_1) + P(T_2) + 2 (d(u)-1) \geq P(T_1) + P(T_2) + 4.$$
Applying the induction hypothesis to $T_1$ and $T_2$ and using the above inequalities yield ${Y_s} \leq P + 4$ in this case as well.
\end{proof}

\section{Open problems} \label{sec:open}
\begin{enumerate}
\item Is the blue curve in Figure \ref{fig:pic} tight? That is, is \eqref{eq:inclusion} in fact an equality? Less ambitiously, can the bound in Lemma \ref{lem:1252} be improved to $Y \leq 9 S + P + K,$ for some universal $K \geq 0$ ? If true, this shows that the first segment of the polygonal curve is tight.
\item Recall that $\cP(k)$ is the projection of the limit set of $k$-profiles to the first two coordinates. Are these sets increasing, i.e., is it true that
$$\cP(k) \subset \cP(k+1)$$
for all integer $k$ ?
\item Let $p \in \Delta_{\cT}(k)$. Does $p_1=0$ imply $p_2 =1$?
\item Imitating a concept from graph theory we define the {\em inducibility} of a tree $T$ to be $\limsup \frac{c(T, {\cal T})}{Z_{|T|}({\cal T})}$ where the $\limsup$ is over trees ${\cal T}$ of size tending to infinity. By gluing many copies of $T$ as in Section \ref{sec:conv} it is easy to show that every $T$ has positive inducibility. 
By Theorem \ref{th:12k} paths and stars are the only trees with inducibility $1$, but are there other trees with inducibility arbitrarily close to $1$ ? If such trees do not exist, is it nonetheless possible to find infinitely many trees of inducibility $\ge \epsilon$ for some $\epsilon >0$ ? Note that in the realm of graphs there are infinitely many distinct graphs with inducibility $>\frac{1}{10}$, for example, the complete bipartite graphs $H=K_{3,r}$ with $r>10$. It can be easily verified that randomly chosen set of $r+3$ vertices in $K_{3n, rn}$ for $n$ large spans a copy of $H$ with probability $>0.1$.
\item Call a sequence of trees $(T_n)$ {\em $k$-universal} if
$$\liminf_{n \rightarrow \infty} (p^{(k)}(T_n))_i > 0$$ for every $i \in [N_k]$. The convexity of $\Delta_{\cT}(k)$ and the fact that every tree has positive inducibility implies that $k$-universal sequences exist. But does there exist a sequence of trees which is $k$-universal simultaneously for every $k$ ? For general graphs the answer is positive, e.g., using $G(n,p)$ graphs.
\item Is there a probabilistic interpretation to the profile of a tree?
\item In this paper we found only linear inequalities satisfied by the sets $\Delta_{\cal T}(k)$. We wonder if higher order inequalities can be derived as well. Is there a framework similar to flag algebras that applies to trees?
\end{enumerate}

\subsection*{Acknowledgements}
The research described here was carried out at the Simons Institute for the Theory of Computing. We are grateful to the Simons Institute for offering us such a wonderful research environment. We also thank an anonymous referee for fixing a mistake in the first version of this paper.

\bibliographystyle{plainnat}
\bibliography{bib}
\end{document}